\begin{document}
\providecommand{\keywords}[1]{\textbf{\textit{Keywords: }} #1}
\newtheorem{satz}{Theorem}
\newtheorem{lemma}[satz]{Lemma}
\newtheorem{prop}[satz]{Proposition}
\newtheorem{kor}[satz]{Corollary}
\theoremstyle{definition}
\newtheorem{defi}{Definition}

\newcommand{\cc}{{\mathbb{C}}}   
\newcommand{\ff}{{\mathbb{F}}}  
\newcommand{\nn}{{\mathbb{N}}}   
\newcommand{\qq}{{\mathbb{Q}}}  
\newcommand{\rr}{{\mathbb{R}}}   
\newcommand{\zz}{{\mathbb{Z}}}  
\author{Joachim K\"onig} 
\title{On rational functions with monodromy group $M_{11}$} 
\address{Universit\"at W\"urzburg, Emil-Fischer-Str.\ 30, 97074 W\"urzburg, Germany}
\ead{joachim.koenig@mathematik.uni-wuerzburg.de}
\date{}
{\abstract{We compute new polynomials with Galois group $M_{11}$ over $\mathbb{Q}(t)$. These polynomials stem from various families of covers of $\mathbb{P}^1\mathbb{C}$ ramified over at least 
4 points. Each of these families has features that make a detailed study interesting. Some of the polynomials lead, via specialization, to number fields with very small discriminant or few
ramified primes.}}
\maketitle
\keywords{Galois theory; number fields; moduli spaces; monodromy groups}
\section{Theoretical background}
As the following sections make use of Hurwitz spaces as moduli spaces for families of covers of the projective line, I will begin with a very brief outline of the theory. 
More thorough introductions may be found e.g.\ in \cite{DF2} or \cite{Voe}.\\\\
Let $G$ be a finite group and $C=(C_1,...,C_r)$ be an $r$-tuple of conjugacy classes of $G$, all $\ne \{1\}$. The Nielsen class of the class tuple $C$ is defined as
$$Ni(C) = \{(\sigma_1,...,\sigma_r) \in G^r\mid \langle \sigma_1,...,\sigma_r\rangle = G, \sigma_1\cdots \sigma_r=1, \exists_{\pi \in S_r}: \ \sigma_i \in C_{\pi(i)} (i=1,...,r)\}.$$
The inner Nielsen class $Ni^{in}(C)$ is defined as the quotient of $Ni(C)$ by the diagonal action of $Inn(G)$.
The analogous sets with the additional requirement $\pi=id$ are called the straight Nielsen classes $SNi(C)$ resp.\ $SNi^{in}(C)$.\\
%
It is well known that the elements of $Ni^{in}(C)$ parametrize branched Galois covers of $\mathbb{P}^1\cc$ in the following way:\\
Let $\{p_1,...,p_r\}$ be a fixed subset of $\mathbb{P}^1\cc$ of cardinality $r$, $p_0\in \mathbb{P}^1\cc\setminus\{p_1,...,p_r\}$ and 
$f:\pi_1(\mathbb{P}^1\cc\setminus\{p_1,...,p_r\},p_0)\to G$ an epimorphism (which, by Riemann's existence theorem, induces a Galois cover) 
mapping the $r$-tuple of standard fundamental group generators to an element of $Ni(C)$.\\ Now $f$ and $f'$ (with same branch point sets and same base point) are defined to by equivalent
if for some $\gamma \in \pi_1(\mathbb{P}^1\cc\setminus\{p_1,...,p_r\},p_0)$: $f(\gamma \delta \gamma^{-1}) = f'(\delta)$ for all 
$\delta \in \pi_1(\mathbb{P}^1\cc\setminus\{p_1,...,p_r\},p_0)$.\\
Letting the branch points vary over all $r$-sets in $\mathbb{P}^1\cc$, the set of these equivalence 
classes forms a (not necessarily connected) algebraic variety (for non-empty $Ni(C)$, of course), known as the inner Hurwitz space $\mathcal{H}^{in}(C)$. This variety comes with a natural morphism
$\Psi: \mathcal{H}^{in}(C) \to \mathcal{U}_r$ to the space $\mathcal{U}_r$ of $r$-subsets of $\mathbb{P}^1\cc$, mapping (an equivalence class of) a cover to the set of its branch points.\\
The elements of a given fiber of $\Psi$ correspond one-to-one to the elements of $Ni^{in}(C)$.\\
If $Z(G)=\{1\}$, then by a famous theorem of Fried and V\"olklein (\cite[Corollary 1]{FV}), $G$ occurs as the Galois group of a regular Galois extension of $\qq(t)$ iff the inner 
Hurwitz space $\mathcal{H}^{in}(C)$ contains a rational point for some class tuple $C$ of $G$. In other words, inner Hurwitz spaces are fine moduli spaces under the assumption $Z(G)=\{1\}$.\\ \\
{\bf Remark}:\\ We restrict here to the notion of inner Hurwitz spaces. In certain contexts, {\it absolute} Hurwitz spaces are more natural. However, for the purpose of this paper,
there is no difference anyway, since the Mathieu group $M_{11}$ has no outer automorphisms. 
\\ \\
To find out about the existence of rational points, one needs to investigate the algebraic structure of the Hurwitz spaces. 
The dimension of the Hurwitz spaces can be reduced by 3 via the action of $PGL_2(\cc)$ on $\mathbb{P}^1\cc$, which induces an equivalence relation on $\mathcal{U}_r$, and thereby also on $\mathcal{H}^{in}(C)$.
Under relatively mild assumptions, rational points on the reduced Hurwitz
spaces also lift to rational points on the non-reduced ones. Especially for $r=4$, reduced Hurwitz spaces are curves.
The genera of these curves, and more generally, of certain curves on Hurwitz spaces of higher dimension,
can be computed from the action of the Hurwitz braid group $\mathcal{H}_r$ on $Ni^{in}(C)$, 
which acts on $Ni(C)$ via $(\sigma_1,...,\sigma_r)^{B_i} = (\sigma_1,...,\sigma_{i-1},\sigma_i\sigma_{i+1}\sigma_i^{-1},\sigma_i,...,\sigma_r)$, $i=1,...,r-1$, where $B_1,...,B_{r-1}$
are the standard generators of the braid group, as defined e.g.\ in \cite[Chapter III.1.2]{MM}.\\
This action has an interpretation as a monodromy action on the fibers of the branch point reference cover $\Psi: \mathcal{H}^{in}(C) \to \mathcal{U}_r$.
\\ Similarly, for different versions of covers of reduced Hurwitz spaces over suitable parameter spaces,\footnote{If some of the classes $C_1$,...,$C_r$ occur several times in the class tuple $C$,
there are various ways to ``symmetrize" the branch points, leading to different variants of reduced Hurwitz spaces. I will not elaborate on this here; 
for a detailed outline cf.\ e.g.\ \cite[Chapter III.7]{MM}.} the appropriate choice of braids yields the monodromy action of the cover.
Therefore, the cycle types of the monodromy group generators 
(which depend on the exact version of the parameter space, e.g.\ on symmetrization of branch points) yield explicit genus formulas for the Hurwitz curves. 
Cf.\ III. (5.11) and Theorem III.7.8 in \cite{MM} for such formulas.\\ 
In particular, for {\it rational} class 4-tuples $C$, if $\mathcal{H}^{in}(C)$ is connected (or more generally, if there is a rigid orbit in the braid group action) and the braid genus is zero, 
with some oddness condition satisfied (assuring that the genus zero curve is actually a rational curve), then the existence of infinitely many rational points on $\mathcal{H}^{in}(C)$ follows.\\
For larger genus (or for varieties of dimension $>1$), explicit computation may help to clarify the situation. Hopefully, this paper supports that there is some value in explicit computation of Hurwitz families.
\section{Overview of known $M_{11}$ extensions of $\qq(t)$}
\label{overview}
In the following, we consider Galois extensions with Galois group the Mathieu group $M_{11}$. 
This group is the smallest sporadic simple group (of order $7920=11\cdot 10\cdot 9\cdot 8$) and has a sharply 4-transitive action on 11 points.\\
There are several known ways to obtain $M_{11}$ as a (regular) Galois group over $\qq(t)$. The first known example (cf.\ \cite[Chapter I.9.4]{MM}) used a 3-point cover defined over $\qq$ 
with regular Galois group $M_{12}$ and genus zero monodromy to obtain the point stabilizer $M_{11}$ via suitable specialization. The same idea, starting with a family of 4-point 
covers with Galois group $M_{12}$ was used by Malle in \cite{Malle} (Section 10).\\ \\
It is also known that $M_{11}$ itself has a family of genus zero (in the action on 12 points!) covers defined over $\qq$ with 4 ramification points (cf.\ Theorem 3.10 in \cite{De}).\\
In other words, there is a family of rational functions $t_a=\frac{f_a(x)}{g_a(x)}\in \qq(a)(t)[x]$ with monodromy group $M_{11}$ (by this, we mean that 
$Gal(f_a(x)-tg_a(x)\mid \qq(a,t)) \cong M_{11}$), where $t$ and $a$ are independent transcendentals. The reason for this is the existence of a rational ($S_3$-symmetrized) Hurwitz curve
for the Nielsen class of $M_{11}$-generating class tuples of type $(2A,3A,3A,3A)$.
\\ \\ In the following, we focus on families of $M_{11}$-covers 
with {\it non-rational} Hurwitz curves and show the existence of rational points on them.
\section{Rational functions with $M_{11}$-monodromy and more than 3 branch points}
The families of covers considered in the following are families of genus zero (with regard to the suitable permutation action); this means that each member can be defined
(a priori over $\cc$) by a polynomial equation of the form $f(x)-t\cdot g(x)=0$ (with $f,g\in \cc[x]$). More precisely, the entire family is a (non-Galois!) genus zero cover of 
$\mathcal{H}(C)\times \mathbb{P}^1$, with $\mathcal{H}(C)$ the corresponding Hurwitz space, and can be defined by a polynomial equation $F(x)-t\cdot G(x)=0$, 
where the coefficients of $F$ and $G$ are algebraic functions in the function field of the Hurwitz space. With the correct choice of conjugacy classes (e.g.\ all classes rational, 
which will be the case for all the examples) this function field is defined over $\qq$.
\\ \\
This yields a basic plan for explicit computations. First, find a single polynomial with the correct monodromy - either modulo a small prime via exhaustive search (an approach that is
convenient if the degrees and therefore the number of variables involved are relatively small), or as a
complex approximation, obtained from an initial 3-point cover via deformation techniques. Next, gain sufficiently many covers (via Newton iteration in $\cc$ or $p$-adic lifting)
to interpolate and find algebraic dependencies between suitable coefficients (or combinations of coefficients) of $F$ and $G$. As the degree of these dependencies is not fixed, one 
should first specialize appropriately to find out which coefficients lead to which degrees. Finally, the coefficients of these dependencies can be recognized as algebraic (and hopefully
rational) numbers via the LLL algorithm. Once a precise polynomial equation over $\qq$ is known, one can search for rational solutions and then check whether these solutions are 
``good" in the sense that they correspond to non-degenerate covers in our family.
These techniques will be illustrated with the concrete examples below, see especially Section \ref{2235}. 
\\ \\
Detailed outlines and computational examples of such techniques can be found in \cite{Cou}, \cite{Koe} (cf.\ especially Chapters 5-8 for explicit computations) or \cite{Malle}.
Notable applications of these techniques include the explicit computation of $M_{24}$-polynomials by Granboulan (\cite{G}) and M\"uller (\cite{Mue}).
\subsection{A family with four branch points and elliptic Hurwitz curve}
\label{2235}
In the following, consider $M_{11}$ in its primitive permutation action on 12 points. Let $2A$ be the conjugacy class of elements of cycle structure $2^4.1^4$ in $M_{11}$, $3A$ be the class of elements of cycle structure
$3^3.1^3$ and $5A$ be the class of cycle structure $5^2.1^2$. The Riemann-Hurwitz genus formula shows that the class tuple $(2A,2A,3A,5A)$ is a genus zero tuple in this degree 12 
action.\\ \\
Let $SNi^{in}(2A,2A,3A,5A):=\{(\sigma_1,...,\sigma_4)\in M_{11}^4 \mid \sigma_1,\sigma_2 \in 2A, \sigma_3\in 3A, \sigma_4\in 5A, 
\langle\sigma_1,...,\sigma_4\rangle=M_{11}, \sigma_1\cdots \sigma_4=1\}/Inn(M_{11})$ the straight inner Nielsen class of $M_{11}$-generating 4-tuples of the prescribed cycle type.\\
This set forms a single orbit (of length 100) under the action of the braid group $\langle B_1, B_2^2, B_3^2\rangle$ 
(the stabilizer in the Hurwitz braid group $\mathcal{H}_4$ of this ordering on the conjugacy classes). The usual braid genus criteria  (cf.\ \cite[Thm. III.7.8a)]{MM}) 
yield that the 
$C_2$-symmetrized Hurwitz curve $\mathcal{C}$ is of genus 1. In order to find out whether it contains rational points, I explicitly computed a model for the corresponding family 
of $M_{11}$-covers, using deformation techniques.\\

As a starting point, one finds a polynomial with the required ramification structure and Galois group $M_{11}$ over the field $\mathbb{F}_7(t)$ (the prime 7 is chosen because it is the smallest
prime not dividing $|M_{11}|$). This can simply be done by exhaustive search, running through all polynomials $f(x,t):=(x^2+a_1x+a_2)^5(x^2+a_3x+a_4)-t(x^2+a_5)^3(x^3+x^2+a_6x+a7) \in \mathbb{F}_7(t)[x]$.
A solution is given by $(a_1,...,a_7):=(0,1,0,2,3,5,1)$. The corresponding field extension of $\mathbb{F}_7(t)$ is ramified over $t\mapsto 0$, $-1$, $-2$ and $\infty$.\\
Next, lift this solution to many different $7$-adic solutions, ramified over $0$, $-1+k\cdot 7$, $-2-k\cdot 7$ and $\infty$ (for various $k\in \zz$). 
Hereby assume, as indicated in the above polynomial $f(x,t)$, that the place $x\to\infty$ lies over $t\to \infty$ with ramification index 3. This is possible without loss, because one of the
3-cycles of an element $\sigma$ of order 3 in $M_{11}$ is fixed by $N_{M_{11}}(\sigma)$, which means that over any field of definition $K$, the corresponding place in $K(x)|K(t)$
is fixed by the decomposition group and therefore a rational place. Furthermore, the second coefficients of two of the factors of $f$ can be fixed to $0$ and $1$, respectively (as above), via
linear transformations.\\
Under these restrictions, there will be a unique $7$-adic solution for each $k$. This lifts e.g.\ the above coefficients $a_i \in \mathbb{F}_7$ to many $a_{i,k} \in \zz_7$. Now all the $a_{i,k}$, $k\in \zz$, are specializations of a coefficient
$A_i\in \overline{\qq}(\mathcal{C})$, with $\overline{\qq}(\mathcal{C})$ the function field of the Hurwitz curve (which is actually defined over $\qq$ in our case). We can therefore obtain an algebraic dependency between any two
of the $A_i$ via interpolation. The degree of these dependencies is a priori unknown. In our case, $A_1$ and $A_3$ turned out to fulfill a polynomial equation of degrees 15 and 13, respectively.\\
This equation is given in the appendix. Note that a priori, the coefficients of the equation are of course over $\mathbb{Q}_7$ (expanded to a given precision). It is however easy to retrieve the
rational numbers from sufficiently precise $p$-adic expansions.\\
Since we know that our function field has genus 1, there must be much simpler defining equations. These can be found via computation of 
Riemann-Roch spaces; as our curve has rational points, this simply amounts to bringing an elliptic curve into Weierstrass normal form.\footnote{In fact, as the equation in $A_1$ and $A_3$ has
rather large degrees and coefficients, it turned out to be difficult to directly compute Riemann-Roch spaces with Magma. Therefore, these calculations were in fact first done modulo $p$, for
several primes $p$. The rational equations were then retrieved via the Chinese remainder theorem.}\\
In our case, one obtains the Weierstrass form $W^2=Y^3-675Y-1250$ for the elliptic curve $\mathcal{C}$.
Explicit algebraic dependencies which enable one to express the functions $W,Y\in \qq(\mathcal{C})$ as rational functions in the coefficients $A_1$ and $A_3$ are given in the appendix.
Now, one easily expresses all the coefficients of our model as rational functions in $W$ and $Y$, by interpolation as above. This yields an equation for the universal family in the form
$F(x)-t\cdot G(x)=0$, with $F,G\in \qq(W,Y)[X]$, which is however too lengthy to conveniently fit into this paper.
\\
Finally, we can specialize to suitable rational points $(w_0,y_0)$ in the above model of $\mathcal{C}$, to obtain an explicit $M_{11}$-polynomial over $\qq(t)$.
One such point (namely, $(w_0,y_0) = (-300,50)$) yielded the following, unexpectedly nice, polynomial:
\begin{satz}
\label{m11small}
Let $f(x,t):= (x^2-x-1)^5(x^2-x-1/16)-t\cdot x^3(x-1)^3(x^3-2) \in \qq(t)[x]$. Then $f$ has Galois group $M_{11}$. 
The branch cycle structure is of type $(2A,2A,3A,5A)$. 
\end{satz}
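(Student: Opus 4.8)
The plan is to establish the two assertions separately: the branch cycle structure $(2A,2A,3A,5A)$ will be read off from the explicit factored shape of $f$, while the identification of the Galois group will rest on the fact that $f$ is a specialisation of the universal $M_{11}$-family over the Hurwitz curve $\mathcal{C}$.

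First I would set up notation: write $f(x,t)=F(x)-t\,G(x)$ with $F(x)=(x^2-x-1)^5(x^2-x-\tfrac1{16})$ (degree $12$, monic) and $G(x)=x^3(x-1)^3(x^3-2)$ (degree $9$). Since $\gcd(F,G)=1$ in $\qq[x]$ and $F-tG$ is primitive over $\qq[t]$, it is irreducible over $\qq(t)$, so $G:=\mathrm{Gal}(f\mid\qq(t))$ is a transitive subgroup of $S_{12}$ and the relevant object is the degree-$12$ cover $x\mapsto t=F(x)/G(x)$ of $\mathbb{P}^1$. The ramification I would read off as follows. Over $t=0$ the fibre is the zero set of $F$: the two roots of $x^2-x-1$ occur with multiplicity $5$ and the roots of $x^2-x-\tfrac1{16}$ are simple, so the inertia generator over $0$ has cycle type $5^2.1^2$, i.e.\ lies in $5A$. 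Over $t=\infty$, since $f$ is monic of degree $12$ in $x$ while $\deg_x G=9$, the fibre consists of the zeros of $G$ — namely $x=0$ and $x=1$ with multiplicity $3$, together with the three simple roots of $x^3-2$ — and the point $x=\infty$ with multiplicity $12-9=3$; hence the inertia generator over $\infty$ has cycle type $3^3.1^3\in 3A$. For the remaining branch locus I would compute $\mathrm{disc}_x(f)\in\qq[t]$ explicitly (there are no spurious $t$-factors, as $f$ is monic in $x$) and verify that, apart from the factor accounting for $t=0$, it has exactly two further zeros $t_1,t_2$ — either both rational or a $\qq$-conjugate pair — at each of which $f(x,t_i)$ acquires precisely four double roots. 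The corresponding inertia generators are then involutions, necessarily of the unique involution class $2A$ of cycle type $2^4.1^4$; a quick Riemann--Hurwitz check ($2\cdot 0-2=12\cdot(-2)+(4+4+6+8)$) confirms consistency with a genus-$0$ cover. This gives the asserted branch cycle structure $(2A,2A,3A,5A)$.

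Next, for the Galois group, note that the four inertia generators are even permutations, so $G$ and its geometric counterpart $\bar G$ lie in $A_{12}$, with $\bar G$ transitive and generated by two elements of type $2^4.1^4$, one of type $3^3.1^3$ and one of type $5^2.1^2$ whose product is trivial. Now $f$ is, by construction, the specialisation at the $\qq$-rational point $(w_0,y_0)=(-300,50)$ of $\mathcal{C}$ of the universal family $F(x)-t\,G(x)=0$ over $\qq(W,Y)$, whose generic member has geometric monodromy group $M_{11}$ by the braid-orbit computation of Section~\ref{2235} (a single braid orbit of length $100$ on the $M_{11}$-generating Nielsen class). Since the four branch points $0,t_1,t_2,\infty$ produced above are pairwise distinct, the point $(w_0,y_0)$ lies off the degeneration locus of the family, so specialisation does not shrink the monodromy group: $\bar G=M_{11}$. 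Finally $\bar G\trianglelefteq G$, and $M_{11}$ in its primitive action on $12$ points has trivial centraliser in $S_{12}$ and no outer automorphisms, whence $N_{S_{12}}(M_{11})=M_{11}$; therefore $G=\bar G=M_{11}$.

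I expect the genuinely delicate point to be the identification $\bar G=M_{11}$ rather than some strictly larger group: the candidate overgroups $M_{12}$ and $A_{12}$ — the $2$-transitive, even subgroups of $S_{12}$ that also contain elements of cycle types $2^4.1^4$, $3^3.1^3$ and $5^2.1^2$ — cannot be excluded by any finite list of Frobenius cycle types obtained by specialising $t$ and factoring modulo primes, since they share all the relevant element orders with $M_{11}$. This is exactly what the rigidity of the braid orbit settles. As an independent safeguard I would reduce $f$ modulo a prime of good reduction not dividing $|M_{11}|$ (for instance $p=7$, the prime already used for the initial search) and compute $\mathrm{Gal}(\bar f\mid\ff_p(t))$ outright; this is a finite computation, and for a prime of good reduction its outcome determines the characteristic-zero geometric monodromy group.
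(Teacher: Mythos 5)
Your branch‐cycle computation is fine, and is more explicit than what the paper offers (the paper simply says the ramification ``can be easily verified''). But your main argument for $\bar G=M_{11}$ has a genuine gap, and the paper's proof is designed precisely to avoid it.

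You deduce $\bar G=M_{11}$ from the statement that ``$f$ is, by construction, the specialisation at the $\qq$-rational point $(-300,50)$ of the universal family over $\qq(W,Y)$.'' That is exactly what a rigorous proof must establish, not assume. The polynomial $f$ in the theorem was produced by a chain of non-rigorous steps (exhaustive search mod $7$, $7$-adic lifting to finite precision, interpolation, LLL recognition of rational coefficients, Riemann--Roch mod $p$ plus CRT). None of this certifies that the clean rational polynomial that came out the other end actually lies on the Hurwitz curve: an interpolation error or a wrong LLL guess would produce a polynomial that \emph{looks} right but has a different monodromy group. So ``specialisation does not shrink the monodromy group'' is true but inapplicable until one has independently verified membership in the family --- which is circular. (By contrast, once $\bar G=M_{11}$ is known, your use of $N_{S_{12}}(M_{11})=M_{11}$ to pass from $\bar G$ to $G$ is clean and correct.)

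You do correctly identify the real obstacle --- that sampling Frobenius cycle types can only give $G\supseteq M_{11}$, never $G\subseteq M_{11}$, since every $M_{11}$ cycle type also occurs in $M_{12}$, $A_{12}$ and $S_{12}$ --- and your ``independent safeguard'' is the right instinct, but you leave it as a one-line remark without saying how one would actually compute $\mathrm{Gal}(\bar f\mid\ff_p(t))$. The paper makes this concrete. It exploits that $M_{11}$, uniquely among the transitive overgroups $M_{12},A_{12},S_{12}$ in $S_{12}$, has a subgroup of index $22$ (the index-$2$ subgroup of $M_{10}\le M_{11}$) that is intransitive on the $12$ points with two orbits of length $6$. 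One therefore builds the degree-$22$ resolvent for the sum of $6$ roots: specialise at $t\mapsto 81/16$ (where $f$ becomes reducible with degree-$6$ factors), expand the roots in $\ff_p((t-81/16))$ for several split primes $p$, interpolate the degree-$22$ minimal polynomial of a $6$-root sum, lift to $\qq$ via CRT, and finally verify that $f$ factors over the corresponding degree-$22$ function field. Combined with Dedekind-style lower bounds showing $G\ge M_{11}$, this pins down $G=M_{11}$ by a finite, checkable computation that depends only on the explicit polynomial $f$ and not on the provenance of its coefficients. That resolvent step is the missing ingredient in your write-up.
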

\begin{proof}
To show $Gal(f) \ge M_{11}$, simply specializing $t$ and reducing modulo suitable primes (to get sufficiently many 
cycle types in the Galois group by Dedekind's criterion) suffices. Now one needs to exclude the proper overgroups 
of $M_{11}$ in $S_{12}$, i.e. $M_{12}$, $A_{12}$ and $S_{12}$. What sets $M_{11}$ apart from all of these is the 
existence of a subgroup of index 11 (the stabilizer $M_{10}$ in the natural degree 11 permutation representation).
This stabilizer has an index 2 subgroup which acts intransitively on 12 points, with two orbits of length 6.
As the specialization $t\mapsto 81/16$ leads to a reducible polynomial with factors of degree 6,
we can develop the roots of $f$ in the Laurent series field $\mathbb{F}_p((t-81/16))$, where 
$p$ is any prime such that the degree 6 polynomials split completely. Then let $a$ be the sum of 6 
such roots (of course belonging to the same degree 6 factor in the non-reduced polynomial); we expect $a$ to have 
a minimal polynomial of degree 22 over $\mathbb{F}_p(t)$, and equating coefficients of the Laurent expansions yields this 
polynomial. Doing the same for suitably many primes, the Chinese remainder theorem also yields the analagous polynomial
(with rational coefficients) for the non-reduced sum of 6 roots.\\
Now one simply verifies that the original polynomial $f$ factors over the function field of the degree 22 polynomial;
this proves $Gal(f)\cong M_{11}$.\\ \\
The assertion about the branch cycle structure can be easily verified.
\end{proof}
The above proof method also yields an explicit degree 11 polynomial with Galois group $M_{11}$ and 
ramification type $(2A,2A,3A,5A)$. As this is a genus-1 tuple on 11 points, one cannot hope to obtain a polynomial 
$g(x,t)$ linear in $t$; however, upon suitable transformations one obtains a polynomial of degree 2 in $t$. One such polynomial is given in the following lemma.
Furthermore, by specializing $t$ to many different rational values, applying the Magma function \texttt{OptimizedRepresentation} and then interpolating between
all ``similar-looking" optimized polynomials, one also obtains a degree-11 polynomial with very small coefficients (and of degree 3 in $t$):
\begin{lemma}
Let $g(x,t):=1/20(x+1)(x-179)(x^3 - 12x^2 + 648x - 464)^3
-(x+1)(7x^7 - 132x^6 + 6912x^5 - 74352x^4 + 822272x^3 - 1104000x^2 - 22464000x - 24883200)t
-x^5(x-8)t^2$ and \\
$h(x,t):=x^{11}+x^7(3x+2)t+x(3x^4 + 14/5x^3 + 4/5x^2 - 40/81x - 16/81)t^2-(x-2/5)(x+2/5)t^3\in \qq(t)[x]$. 
Then $g$ and $h$ have Galois group $M_{11}$ in its natural degree 11 action on the roots.
\end{lemma}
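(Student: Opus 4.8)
The plan is to follow the method used for Theorem~\ref{m11small}, adapted to the degree-$11$ action. This time the situation is in one respect cleaner: the only transitive overgroups of $M_{11}$ in $S_{11}$ are $A_{11}$ and $S_{11}$, since every other transitive group of degree $11$ --- that is, $C_{11}$, $D_{11}$, the Frobenius groups $11{:}5$ and $\mathrm{AGL}_1(11)$, and $\mathrm{PSL}_2(11)$ --- is already contained in $M_{11}$. So it suffices to prove, for each of $g$ and $h$, that (i) its Galois group over $\qq(t)$ contains a transitive copy of $M_{11}$, and (ii) it is neither $A_{11}$ nor $S_{11}$.

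For (i) I would first check that $g$ and $h$ are irreducible of degree $11$ over $\qq(t)$ (straightforwardly, e.g.\ after reduction modulo a suitable prime), so that their Galois groups are transitive of degree $11$. Next, specializing $t$ at several rational values and reducing modulo several primes not dividing the relevant discriminants, Dedekind's criterion exhibits Frobenius cycle types in the Galois group; it is enough to find one element of order $8$ (which occurs with cycle type $8.2.1$). No transitive group of degree $11$ other than $M_{11}$, $A_{11}$ and $S_{11}$ contains such an element --- the solvable ones have order dividing $110$, and $\mathrm{PSL}_2(11)$ has no element of order $8$ --- so $\mathrm{Gal}(g), \mathrm{Gal}(h) \in \{M_{11}, A_{11}, S_{11}\}$.

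Step (ii) is the main point, and it uses exactly the kind of arithmetic invariant exploited in Theorem~\ref{m11small}: $M_{11}$ preserves a Steiner system $S(4,5,11)$ on its $11$ points, equivalently it is intransitive on the $\binom{11}{5}=462$ five-element subsets, with one orbit of length $66$ (the blocks, with stabiliser $S_5$), whereas $A_{11}$ and $S_{11}$ are both transitive on $5$-subsets. Concretely, for the roots $\beta_1,\dots,\beta_{11}$ of $g$ I would form the resolvent $R(y,t)\in\qq(t)[y]$ of degree $462$ whose roots are the linear combinations $\sum_{i\in S} c_i\beta_i$ over all $5$-subsets $S$ (with fixed generic weights $c_i$ chosen so that these $462$ values are distinct), computed by the same technique as in Theorem~\ref{m11small}: expand the $\beta_i$ in $\ff_p((t-t_0))$ for a suitable $t_0$, form the elementary symmetric functions over $5$-subsets, interpolate to recover $R$ modulo $p$, and lift via the Chinese remainder theorem. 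Then $\mathrm{Gal}(g)$ is $M_{11}$ (in its degree-$11$ action) exactly when $R$ admits a factor of degree $66$ in $\qq(t)[y]$; one verifies that it does, and the same computation disposes of $h$. Alternatively, for $g$ one may avoid the large resolvent by checking --- again as in Theorem~\ref{m11small} --- that the degree-$12$ polynomial $f$ of that theorem splits into linear factors over the splitting field of $g$: this makes $\mathrm{Gal}(g)$ surject onto $\mathrm{Gal}(f)=M_{11}$, and since neither $A_{11}$ nor $S_{11}$ has $M_{11}$ as a quotient, combined with (i) this forces $\mathrm{Gal}(g)=M_{11}$. The only real obstacle is computational: the degree-$462$ resolvent has unwieldy coefficients, so in practice the interpolation is carried out modulo many primes and reassembled, precisely as the footnote in Section~\ref{2235} describes; no conceptual difficulty arises.
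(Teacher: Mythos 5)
The paper gives no proof environment for this lemma; the sentence immediately before it (``The above proof method also yields an explicit degree 11 polynomial\ldots'') indicates that the intended argument is the resolvent technique of Theorem~\ref{m11small}, carried over to the degree-$11$ representation. Your proposal is a correct instantiation of exactly that: Dedekind plus cycle types for the lower bound, a set resolvent keyed to the Steiner system $S(4,5,11)$ for the upper bound, with the helpful simplification that the only transitive overgroups of $M_{11}$ in $S_{11}$ are $A_{11}$ and $S_{11}$. The one cosmetic difference is that the paper's method (as carried out for $f$) constructs the small resolvent directly --- there, of degree $22$, here the degree-$66$ block resolvent would be built from a degenerate specialization via Laurent expansion --- rather than forming the full degree-$462$ resolvent and locating a degree-$66$ factor; both yield the same conclusion and use the same $p$-adic/CRT machinery, so this is a computational variant rather than a different route.
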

{\bf Remarks}:
\begin{itemize}
\item There seemed to be no reason why a member of this family would have such nice and small coefficients as the polynomial $f$ in Theorem \ref{m11small}. 
It would therefore be interesting to know whether there is a ``natural" explanation for the existence of this polynomial (possibly one that could generalize to other Mathieu groups).
\item 
The elliptic curve given by $W^2=Y^3-675Y-1250$ is of rank 1, so there are infinitely many $\qq$-points; and as only finitely many of those
 can correspond to degenerate covers, there will be infinitely many equivalence classes of $M_{11}$-polynomials over $\qq$ in this family! 
 Unfortunately, the rational point that leads to the above polynomial $f$ seems to be the only one where the coefficients in our computational model have small height. 
 Nevertheless, explicit algebraic dependencies between
 any of those coefficients and the Weierstrass $\wp$ function of the elliptic curve make it possible to find many more rational points on the curve and therefore $M_{11}$-polynomials.\\
 We conclude this section with a polynomial from this family which possesses specializations with totally real Galois closure. The large rational numbers that occur as coefficients 
 could hardly have been found via exhaustive search, but are easy to find once a dependency betweeen coefficients of our model and the Weierstrass $\wp$ function is known.
 \end{itemize}
\begin{satz}
Let $h(t,x):=h_1(x)^5\cdot h_2(x)- t\cdot h_3(x)^3\cdot h_4(x)$, where
\begin{center}
$h_1(x):=217019x^2 + 87907368140554183100162x + 4508281300847688731169319769769841835444$,\\
$h_2(x):=37258218541488534910719399x^2 + 246529569778532375140876985021306849541294312x + 14635567009082568185596461891292367332677180975427256879233264$,\\
$h_3(x):=2223585235421468919x^2 + 274156098050901691661379937349434622x + 8443409213944751989643951279235627283533452107433272$,\\
$h_4(x):=13385859493702x^3 + 225702085730490664288660568855789598873301353168x + 16436156522418265669179020769359057896808189915554213257292620245$.
\end{center}
Then $h$ has Galois group $M_{11}$ over $\qq(t)$, and for all $t_0\in (2.65\cdot 10^{39}, 3.42\cdot 10^{39})$, the specialized polynomial $h(t_0,x)$ has only real roots.
\end{satz}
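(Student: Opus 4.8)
\emph{Proof plan.}
The plan is to establish the two assertions separately: the statement on the Galois group essentially by the method of Theorem~\ref{m11small}, and the statement on real roots by a monodromy argument combined with a single intermediate-value computation.

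For the Galois group, recall that by the construction of Section~\ref{2235} the polynomial $h$ is the specialization of the universal family $F(x)-t\,G(x)=0$, $F,G\in\qq(W,Y)[x]$, at a rational point $(w_0,y_0)$ of the elliptic curve $\mathcal{C}\colon W^2=Y^3-675Y-1250$ carrying the $C_2$-symmetrized Hurwitz curve of the Nielsen class $(2A,2A,3A,5A)$. Since $Z(M_{11})=1$, this Hurwitz space is a fine moduli space, so it is enough to verify that $(w_0,y_0)$ is not a boundary point, i.e.\ that the cover defined by $h$ still has exactly four distinct branch points of the prescribed cycle types. Concretely, I would check that $h(0,x)=h_1(x)^5 h_2(x)$ has $x$-factorization type $5^2.1^2$; that $h$ has $x$-degree $12$ while $\deg(h_3^3h_4)=9$, so that $t=\infty$ contributes two finite triple points together with $x=\infty$, i.e.\ type $3^3.1^3$; and that $\operatorname{disc}_x(h)\in\qq[t]$ vanishes, to the orders forced by Riemann--Hurwitz, exactly at $t=0$ and at two further nonzero values. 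This identifies the monodromy group of $h$ as $M_{11}$ with branch cycle structure $(2A,2A,3A,5A)$. (For a self-contained argument one may instead combine the lower bound $\operatorname{Gal}(h)\supseteq M_{11}$, obtained by specializing $t$ and reading off enough cycle types modulo small primes via Dedekind, with the degree-$22$ resolvent of Theorem~\ref{m11small} coming from the action of $M_{11}$ on partitions of the twelve points into two sextuples, which excludes $M_{12}$, $A_{12}$ and $S_{12}$.)

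For the statement on real roots I would set
$$\phi(x):=\frac{h_1(x)^5 h_2(x)}{h_3(x)^3 h_4(x)},$$
a rational function of degree $12$ with real coefficients (one checks that $h_1h_2$ and $h_3h_4$ are coprime). Since the zero fibre and the pole fibre of $\phi$ lie over distinct points of $\mathbb{P}^1$ and $\phi(\infty)=\infty$, for every $t_0\in\rr$ the roots of $h(t_0,x)$ are exactly the twelve, necessarily finite, points of the fibre $\phi^{-1}(t_0)$; so the claim is equivalent to $\phi^{-1}(t_0)\subset\rr$ for all $t_0$ in the stated interval. The key observation is that the critical values of $\phi$ are exactly the four branch points of the cover, which by the first part are $0$, $\infty$ and two further values; a short computation then shows that these two are real --- call them $\beta_1<\beta_2$ --- with $0<\beta_1<\beta_2$ and, numerically, $\beta_1\approx 2.65\cdot 10^{39}$, $\beta_2\approx 3.42\cdot 10^{39}$. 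Hence the open interval $(\beta_1,\beta_2)$ contains no critical value of $\phi$, so on it the function $t_0\mapsto \#\bigl(\phi^{-1}(t_0)\cap\rr\bigr)$ is locally constant --- on $(\beta_1,\beta_2)$ real preimages can neither collide into a multiple root nor be born from a complex-conjugate pair --- and therefore constant.

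It then suffices to evaluate that count at one interior point. For $t_0=3\cdot 10^{39}$ I would run a Sturm-chain computation for $h(3\cdot 10^{39},x)\in\zz[x]$ (equivalently, a certified isolation of its real roots) and check that all twelve roots are real; by the previous paragraph this then holds for every $t_0\in(\beta_1,\beta_2)$, in particular for every $t_0$ in the interval of the statement, which is contained in $(\beta_1,\beta_2)$. I expect the only real obstacle to be computational: locating $\beta_1,\beta_2$ and certifying that $h(3\cdot 10^{39},x)$ has no complex roots both involve polynomials with coefficients of several dozen decimal digits, so one has to work either in exact integer arithmetic (a Sturm chain) or in interval arithmetic with enough precision to exclude a nearly-colliding conjugate pair. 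The underlying geometry is unsurprising once spelled out --- the $2A$-critical points are the only critical points of $\phi$ of even ramification index, hence the only genuine turning points of $\phi|_\rr$; all critical points of $\phi$ turn out to be real; and so $\phi|_\rr$ already covers $(\beta_1,\beta_2)$ exactly twelve-to-one --- but making it rigorous still needs the single-point check above.
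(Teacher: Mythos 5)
Your proposal is correct and follows essentially the same approach as the paper: identify the two nonzero finite real branch points as (approximately) the endpoints of the stated interval, note that the number of real roots can only change at a branch point, and verify at a single interior specialization (by a rigorous computation such as a Sturm chain) that all twelve roots are real. The paper's proof is terser and explicitly defers the Galois group assertion (handled by the same method as Theorem~\ref{m11small}), but the underlying argument for total reality is identical to yours.
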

\begin{proof}
We only show the assertion about totally real number fields. This is, however, easy, as it suffices to verify it with the computer for one specialization $t_0$ in the above interval.\\
The bounds of this interval are (approximately) the non-zero finite real branch points of $h$. As the number of real roots can only change at a branch point, the assertion follows. 
\end{proof}

\subsection{A family with five branch points}
Here we consider the class 5-tuple $(2A,2A,2A,2A,3A)$ in $M_{11}$. In the action on 12 points, this is a genus zero tuple. The Nielsen class $SNi^{in}(2A,2A,2A,2A,3A)$ of 
$M_{11}$-generating tuples is of length 2376 (cf.\ Table 2 in \cite{MSV}) and forms a single orbit under the Hurwitz braid group. By fractional linear transformations, we can 
(at least generically) fix the branch points of the corresponding $M_{11}$-covers to $\infty$ (for the element of order 3) and the roots of a degree 4 polynomial of the form 
$x^4+ax^2+ax+b$, with $a,b\in \cc$. The corresponding reduced Hurwitz space is then a surface defined over $\qq$. Even though the standard braid genus criteria seem to yield no
obvious rational curves on this surface, we can still hope to find points by explicit computation.\\
As in the previous section, lifting an initial approximate solution, i.e.\ a solution modulo a prime $p$  (here for $p=7$) to many different $p$-adic solutions with different branch point locus yielded, 
via interpolation, an algebraic dependency - this time between three suitable coefficients in the model (as the Hurwitz variety is two-dimensional), of degrees 14, 16 and 19.\\ Searching for points on this variety yields several ``bad" solutions (e.g.\ corresponding to degenerate
covers with fewer than 5 branch points and smaller Galois group); but there are also ``good" rational points, and with rather small coefficients. Two of those are given in the following theorem.
It would be interesting to know if there are infinitely many non-equivalent covers defined over $\qq$ in this family, and if the Hurwitz space is maybe in fact a rational surface.
\begin{satz}
\label{m11_5er}
The polynomials
$f(x,t):=(x^{12} + 24x^{11} + 204x^{10} + 912x^9 + 2676x^8 + 4032x^7 + 9056x^6 - 1920x^5 + 7728x^4 - 16512x^3 - 20544x^2 - 6912x - 1088)
-t(x^2-2)^3(x^3+3x^2+6x+2)$ and 
$g(x,t):=(x^4 + 4/3x^3 + 2/3x^2 - 1/11)^2(x^4 + 22/3x^3 - 8/3x^2 - 2x + 13/11)-t(x^2-3/11)^3(x^3+x^2+1/11x-1/11)$
have Galois group $M_{11}$ over $\qq(t)$. In both cases, the branch cycle structure is of type $(2A,2A,2A,2A,3A)$.
\end{satz}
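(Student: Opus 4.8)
The plan is to treat $f$ and $g$ by the two-step recipe used in the proof of Theorem~\ref{m11small}: first force $Gal\supseteq M_{11}$ from Dedekind cycle types, then eliminate the three overgroups $M_{12}$, $A_{12}$, $S_{12}$ of $M_{11}$ in $S_{12}$ by producing a subfield of degree $22$ over which the (degree $12$) polynomial becomes a product of two sextics. I describe it for $f$; the argument for $g$ is identical.

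For the lower bound I would specialize $t$ to a few rational values $t_0$ and reduce the (separable) polynomials $f(x,t_0)$ modulo several primes $p$ of good reduction; by Dedekind's theorem each factorization type is the cycle type of an element of $Gal(f)\le S_{12}$. It is enough to accumulate cycle types witnessing transitivity, primitivity of the degree-$12$ action, and, say, elements of orders $11$ and $8$: running through the primitive groups of degree $12$, the only ones meeting all of these constraints are $M_{11}$ and its overgroups $M_{12}$, $A_{12}$, $S_{12}$ — in particular $PSL_2(11)$ and $PGL_2(11)$, which contain an $11$-cycle but no element of order $8$, are excluded. Hence $Gal(f)\supseteq M_{11}$.

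For the upper bound I would use, as in Theorem~\ref{m11small}, that among $M_{11},M_{12},A_{12},S_{12}$ only $M_{11}$ has a subgroup of index $11$: the point stabilizer $M_{10}$ of the natural degree-$11$ action contains $A_6$ with index $2$, and $A_6$ has two orbits of size $6$ on the $12$ points, the $M_{11}$-orbit of such a $6$-set having length $22$. So I would locate a rational specialization $t_0$ at which $f(x,t_0)$ splits into two sextics, pick a prime $p$ over which both sextics split completely, expand the $12$ roots of $f$ as Laurent series in $\mathbb{F}_p((t-t_0))$, and set $a$ equal to the sum of the six roots belonging to one sextic. Comparing Laurent expansions gives the minimal polynomial of $a$ over $\mathbb{F}_p(t)$, which should come out of degree $22$; repeating for several primes and applying the Chinese remainder theorem yields the degree-$22$ polynomial $R(a,t)\in\qq(t)[a]$, after which one checks directly that $f$ factors into two sextics over the degree-$22$ function field $\qq(t)(a)$. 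If $Gal(f)=M_{11}$ this is automatic (the group drops to $A_6$ over $\qq(t)(a)$), whereas if $Gal(f)$ were $M_{12}$, $A_{12}$ or $S_{12}$, the analogous sum of six roots would have many more than $22$ conjugates — none of these groups has a subgroup of index $22$ with two orbits of length $6$; e.g.\ the hexad stabilizers in $M_{12}$ have index $132$ — so no degree-$22$ polynomial $R$ with this factorization property could exist. This forces $Gal(f)\cong M_{11}$, and the same computation is then carried out for $g$.

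For the branch cycle structure, the finite branch points are the roots of $\operatorname{disc}_x f$ (there are four), and over $t=\infty$ the shape $(x^2-2)^3(x^3+3x^2+6x+2)$ — respectively $(x^2-3/11)^3(x^3+x^2+1/11\,x-1/11)$ for $g$ — together with the degree comparison ($12$ versus $9$) already displays an inertia generator of cycle type $3^3.1^3$; factoring $f$ (resp.\ $g$) over the completion at each of the four finite branch points, or reading off the Newton--Puiseux data, gives inertia cycle type $2^4.1^4$ there. Since in $M_{11}$ the cycle type determines the conjugacy class for both $2^4.1^4$ and $3^3.1^3$, the branch cycle structure is $(2A,2A,2A,2A,3A)$. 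The main obstacle is the upper-bound step: one must find a degenerating specialization $t_0$ that produces precisely the $6+6$ pattern tied to $A_6$ (not every reducible specialization does), and then carry out the sizeable resolvent and factorization computations — once for $f$ and once for $g$ — though each individual sub-step is routine with a computer algebra system, exactly along the lines of the proof of Theorem~\ref{m11small}.
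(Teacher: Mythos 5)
Your proposal is correct and is exactly the approach the paper takes: the published proof of this theorem is the single line ``the proof can be carried out in analogy with Theorem~\ref{m11small},'' and you have accurately unpacked that analogy (Dedekind cycle types for the lower bound, the degree-$22$ resolvent coming from the index-$2$ subgroup $A_6$ of the point stabilizer $M_{10}$ with its two orbits of length $6$ for the upper bound, and ramification data for the branch cycle type). No discrepancy to report.
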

\begin{proof}
For both polynomials, the proof can be carried out in analogy with Theorem \ref{m11small}.
\end{proof}
Note that the polynomials $f$ and $g$ in the previous theorem are not equivalent (via fractional linear transformations); in particular, the finite ramification points of $f$ are
the roots of an irreducible degree 4 polynomial, while $g$ has a rational branch point at $t=0$.
\subsection{Rational functions of degree 11}
The group $M_{11}$ is the monodromy group of rational functions over $\qq$ of degree 11 as well as 12. For degree 12, this has been seen in several different ways already.\\
For degree 11, the tuple of classes with cycle structures $(2^4.1^3,2^4.1^3,3^3.1^2,4^2.1^3)$ 
has a Hurwitz curve of genus 2. Explicit computations of this curve yielded a ``good" rational point, allowing a non-degenerate cover of this family defined over $\qq$: 
\begin{satz}
\label{2234}
The polynomial
$f(t,x):=(77x^3 + 10989x^2 + 129816x + 496368)^3(77x^2 + 2376x + 15472)-t(11x^2-1296)^4(11x^2+143x+621) \in \qq(t)[x]$ has Galois group $M_{11}$ over $\qq(t)$.
\end{satz}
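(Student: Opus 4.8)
The plan is to follow the same strategy as in the proof of Theorem~\ref{m11small}, since the polynomial $f(t,x)$ here is again an explicitly given genus-zero cover, but now in the degree $11$ action rather than the degree $12$ action. First I would establish the lower bound $Gal(f)\ge M_{11}$ by specializing $t$ to several rational values, reducing modulo suitable primes, and reading off cycle types via Dedekind's criterion; collecting enough cycle types (for instance a $5$-cycle, an $8$-cycle, and an element of order $11$, or whatever combination the computer turns up) forces the specialized Galois group — and hence $Gal(f\mid\qq(t))$ — to contain a transitive group on $11$ points that is large enough to be $M_{11}$, $A_{11}$ or $S_{11}$. Since the cover has genus zero and degree $11$, and the prescribed ramification is $(2^4.1^3,\,2^4.1^3,\,3^3.1^2,\,4^2.1^3)$, the Riemann--Hurwitz count is consistent with $M_{11}$; but to pin down the group I still need an upper bound.

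The upper bound is where the real work lies: I must rule out $A_{11}$ and $S_{11}$ (the only transitive overgroups of $M_{11}$ in $S_{11}$, since $M_{11}$ is maximal in $A_{11}$). The distinguishing feature to exploit is that $M_{11}$ has a transitive action on $12$ points, equivalently a subgroup of index $12$ — namely $L_2(11)$ — whereas $A_{11}$ and $S_{11}$ have no subgroup of index $12$. Concretely, as in the proof of Theorem~\ref{m11small}, I would locate a rational specialization $t\mapsto t_0$ at which $f(t_0,x)$ factors in a way revealing an intransitive subgroup of the point stabilizer (or, more robustly, compute the degree-$12$ resolvent directly): develop the roots of $f$ as Laurent series over $\ff_p((t-t_0))$ for a well-chosen prime $p$, form the appropriate symmetric function of roots corresponding to the $L_2(11)$-coset action (a sum of roots grouped according to the orbits of $L_2(11)$ on the $11$ letters), and equate coefficients of the Laurent expansions to obtain its minimal polynomial of degree $12$ over $\ff_p(t)$. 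Repeating for several primes and applying the Chinese remainder theorem yields this degree-$12$ resolvent over $\qq$. One then checks that $f$ itself factors over the function field defined by this resolvent, which exhibits the index-$12$ subgroup inside $Gal(f)$ and thereby forces $Gal(f)\cong M_{11}$, since neither $A_{11}$ nor $S_{11}$ admits such a subgroup.

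The main obstacle I expect is purely computational rather than conceptual: the degree-$12$ resolvent polynomial will have very large coefficients, so the Laurent-series expansions must be carried to high precision and the CRT reconstruction run over enough primes — exactly the sort of difficulty already flagged in the footnotes of Section~\ref{2235}. A secondary point that must be verified is that the chosen specialization $t_0$ is \emph{non-degenerate} for the resolvent computation (i.e.\ the factorization pattern genuinely reflects the generic behaviour and not an accidental degeneration at $t_0$), and that the rational point on the genus-$2$ Hurwitz curve from which $f$ was obtained is ``good'' in the sense of corresponding to an honest four-branch-point $M_{11}$-cover with the stated ramification $(2A,2A,3A,4A)$ — the latter being a finite check on cycle structures, analogous to the last line of the proof of Theorem~\ref{m11small}. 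Once these pieces are in place the conclusion $Gal(f)\cong M_{11}$ over $\qq(t)$ follows.
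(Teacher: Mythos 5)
You follow the same overall strategy the paper uses for Theorem~\ref{m11small} (and tells the reader to reuse for the other explicit polynomials): lower bound on $Gal(f)$ by Dedekind reductions at specialized $t$, upper bound by computing a resolvent via Laurent expansions over $\ff_p((t-t_0))$ together with CRT reconstruction, then verifying a factorization of $f$ over the resolvent's function field. The paper gives no written-out proof of Theorem~\ref{2234} --- it states the result and immediately turns to the uniqueness claim via the genus-$2$ hyperelliptic model --- so the implied proof is indeed the analogue of the one for Theorem~\ref{m11small}, and you have correctly identified the group-theoretic lever: $A_{11}$ and $S_{11}$ have no subgroup of index $12$, while $M_{11}$ has $L_2(11)$.

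There is, however, a concrete defect in your description of the resolvent invariant. You suggest taking ``a sum of roots grouped according to the orbits of $L_2(11)$ on the $11$ letters.'' But $L_2(11)$, as the index-$12$ subgroup of $M_{11}$ in the natural degree-$11$ action, is \emph{transitive} (in fact $2$-transitive, with point stabilizer $A_5$) on those $11$ letters. Its only ``orbit sum'' is therefore the sum of all eleven roots --- fixed by all of $S_{11}$, hence useless as a resolvent invariant. This is precisely why, in the degree-$12$ argument of Theorem~\ref{m11small}, the paper does \emph{not} use the index-$11$ or index-$12$ subgroup directly, but descends to an index-$22$ subgroup with two orbits of length $6$ on the $12$ points; only then is a sum of six roots a nontrivial invariant. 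Your degree-$11$ argument needs the analogous repair: either build an $L_2(11)$-invariant out of $3$-subsets or larger blocks of roots (since $L_2(11)$ is $2$-transitive, no linear or quadratic polynomial in the roots can work), or descend to an intransitive subgroup inside $L_2(11)$ or $M_{10}$ and accept a higher-degree resolvent, as the paper itself does. Once a genuine invariant is chosen, the rest of your plan (Laurent expansion over $\ff_p$, CRT, factorization check over the resolvent function field) carries through and does prove $Gal(f)\cong M_{11}$.
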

In fact one can show that up to equivalence (i.e.\ fractional linear transformations in $t$ and in $x$), this is the {\it only} polynomial in this family which is defined over $\qq$.\\
The reason is that the genus 2 Hurwitz curve turns out to be birationally equivalent to the hyperelliptic curve given by $y^2=(x^2 - x + 3)(x^2 + 1)(x^2 + x + 1)$. For such curves,
there are methods to explicitly determine, under a few assumptions, the complete set of rational points. Using Magma, we found that the Jacobian of the above curve is of rank 1,
and Chabauty's method (as described e.g.\ in \cite{McCPoo}) then yields that there are exactly four rational points; only one of these corresponds to a non-degenerate 
(i.e.\ 4 branch points) cover.\\ \\
{\bf Remark:} The polynomial in Theorem \ref{2234} will be used in a forthcoming paper with D.\ Neftin (\cite{KN}) to show that $M_{11}$ is $\qq$-admissible.
%
\section{Number fields with Galois group $M_{11}$ and small discriminant}
Of course, the polynomials computed above lead to infinitely many polynomials with Galois group $M_{11}$ over $\qq$,
via Hilbert's irreducibility theorem. Among those, polynomials that lead to number fields with small discriminant 
(either with regard to absolute value, or with regard to the number or absolute value of the prime divisors) are traditionally of 
special interest. Systematic collections for number fields with small discriminant and Galois groups of small degree can be found in the databases by 
Jones and Roberts (\cite{JR}), and by Kl\"uners and Malle (\cite{KlM}).
\\ \\
The very small coefficients of the polynomial $f(x,t)$ from Theorem \ref{m11small} lead to nice number fields upon specializing $t\mapsto t_0\in \qq$ appropriately. 
In the following lemma, we collect a few sample polynomials which lead to nice field discriminants:

\begin{lemma}
With $f$ as in Theorem \ref{m11small}, let $f_0(x):= f(x,1/8)$, $f_1(x):=f(x, 25/4)$, $f_2(x):=f(x,25/2)$ and $f_3(x):=f(x,2\cdot 3^6/5^3) \in \qq[X]$.\\
Then $Gal(f_i\mid \qq) \cong M_{11}$ (for $i=0,...,3$).
Furthermore, if $\xi$ is a root of $f_0$, then $\qq(\xi)$ has discriminant $\Delta=2^8\cdot 3^{16}\cdot 97^4$. The root 
discriminant is therefore $\Delta^{1/12} = 31.55...$\\
In the same way, if $\xi$ is a root of $f_1$, $\qq(\xi)\mid\qq$ is ramified only above $p=2,3,5$ and $11$, i.e.\ the 
prime divisors of $|M_{11}|$. The root discriminant of $\qq(\xi)$ is equal to $(2^8\cdot 3^{12}\cdot 5^{10}\cdot 11^6)^{1/12}=60.39...$\\
For $\xi$ a root of $f_2$, $\qq(\xi)\mid \qq$ is ramified only over 2,3 5 and 7, with root discriminant $(2^8\cdot 3^{16}\cdot 5^{10}\cdot 7^4)^{1/12}=50.23...$\\
Finally, a root field of $f_3$ has discriminant $45513961^4$, and is therefore ramified only over the prime $45513961$.
\end{lemma}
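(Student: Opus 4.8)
The plan is to verify each assertion by direct computation, following the template already established in the proof of Theorem \ref{m11small}. First I would confirm $Gal(f_i\mid\qq)\cong M_{11}$ for $i=0,\dots,3$. Since each $f_i$ is a specialization of the polynomial $f$ from Theorem \ref{m11small}, which is known to have Galois group $M_{11}$ over $\qq(t)$, Hilbert irreducibility guarantees that this holds for all but a thin set of values $t_0$; to certify the specific values $t_0\in\{1/8,\ 25/4,\ 25/2,\ 2\cdot3^6/5^3\}$ one argues as in Theorem \ref{m11small}: reduce $f_i$ modulo several small primes of good reduction and read off cycle types via Dedekind's criterion to force $Gal(f_i)\ge M_{11}$, then exclude the overgroups $M_{12}$, $A_{12}$, $S_{12}$ in $S_{12}$ by exhibiting that $f_i$ factors over the function field (here: number field) cut out by the degree-$22$ resolvent coming from the index-$11$ subgroup $M_{10}$, exactly as before. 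The key point is that each chosen $t_0$ must avoid the branch locus $\{0,-1,-2,\infty\}$ of $f$ and any further degeneracy locus; a quick check shows $1/8,25/4,25/2,2\cdot3^6/5^3$ are all outside $\{0,-1,-2\}$, so this is unproblematic.

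Next I would compute the field discriminants. For each $i$, form the specialized polynomial $f_i(x)\in\qq[x]$ (of degree $12$), compute a maximal order of the number field $\qq(\xi)$ it generates — e.g.\ via the Round-2 / Round-4 algorithm or simply with Magma's \texttt{MaximalOrder} — and extract the discriminant $\Delta=\disc(\qq(\xi))$. One then reads off the prime factorizations: $\Delta(f_0)=2^8\cdot3^{16}\cdot97^4$, $\Delta(f_1)=2^8\cdot3^{12}\cdot5^{10}\cdot11^6$, $\Delta(f_2)=2^8\cdot3^{16}\cdot5^{10}\cdot7^4$, and $\Delta(f_3)=45513961^4$ (with $45513961$ verified prime). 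The claimed root discriminants are then the twelfth roots of these, giving the stated numerical values $31.55\ldots$, $60.39\ldots$, $50.23\ldots$. For $f_1$, $f_2$ one observes additionally that the set of ramified primes is contained in, and in fact equals for $f_1$, the set of prime divisors of $|M_{11}|=2^4\cdot3^2\cdot5\cdot11$ (for $f_2$ the prime $7$ appears and $11$ does not, consistent with $t_0=25/2$ having a different $p$-adic profile); for $f_3$ the single ramified prime is $45513961$.

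The main obstacle is purely computational reliability rather than conceptual: one must be careful that the discriminant of the \emph{field} (not of the polynomial $f_i$, which will typically carry extra square factors from non-maximality at small primes) is obtained, so the index $[\mathcal{O}_{\qq(\xi)}:\zz[\xi]]$ must be computed correctly at every prime dividing $\disc(f_i)$ — in particular at $2$, $3$, $5$ where wild ramification can occur and where $p$-maximality must be checked with care. A secondary point worth spelling out is that the exponents on $2,3,5$ in the discriminants are forced by the local ramification types: the class $2A$ (cycle type $2^4.1^4$) and $3A$ ($3^3.1^3$) and $5A$ ($5^2.1^2$) in the degree-$12$ action determine, via the tame or wild conductor-discriminant formula, the exact power of each branch prime once one knows which rational branch point $f$ degenerates over modulo $p$ after specializing $t\mapsto t_0$; for instance $\Delta(f_0)$ is ramified only at $2,3,97$ precisely because $t_0=1/8$ meets the reduction of the branch divisor only at those primes. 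For the proof it suffices, however, to record the outcome of the computation, as the assertions are finite and effectively checkable; I would therefore conclude with the remark that each statement has been verified directly with Magma, and omit the routine details.
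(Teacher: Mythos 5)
The paper in fact states this lemma without a proof, treating it as a collection of machine-checkable facts, so there is no author argument to compare against; your proposal of ``verify with Magma'' is essentially what was done. There is, however, one genuine simplification you miss: once $\mathrm{Gal}(f\mid\qq(t))\cong M_{11}$ is established (Theorem \ref{m11small}), the Galois group of any separable specialization $f(x,t_0)$ is automatically a \emph{subgroup} of $M_{11}$, via the specialization homomorphism on decomposition groups; there are no overgroups $M_{12}$, $A_{12}$, $S_{12}$ to exclude. So the degree-$22$ resolvent argument, which was needed in the proof of Theorem \ref{m11small} to pin the group down from above over $\qq(t)$, is redundant here: for each $t_0$ it suffices to exhibit, via Dedekind reduction modulo a few good primes, cycle types (for instance one element of order $11$ and one of order $8$) that rule out every proper subgroup of $M_{11}$ in the degree-$12$ action. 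The remainder of your proposal --- compute a maximal order, read off the field discriminant rather than the polynomial discriminant, check $p$-maximality with particular care at $2$, $3$, $5$ where wild ramification can occur, and confirm that $45513961$ is prime --- is exactly the needed verification and is sound.
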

The discriminants of $f_0$, $f_1$ and $f_2$ above (and many others, for other specializations of $t$) are significantly smaller than the smallest previously known values for $M_{11}$-extensions 
(In the database \cite{KlM}, the smallest root discriminant is $(661^8)^{1/12}=75.88...$, 
however with the remarkable property that only the prime $661$ ramifies).\\ In particular, the root discriminant of
$f_0$ is smaller than $8\pi e^\gamma = 44.76...$; it is known that, if the Generalized Riemann Hypothesis holds, only
finitely many number fields have a root discriminant smaller than this value.\\
The polynomial $f_3$ above provides the second known instance of an $M_{11}$ number field ramified only at one prime, after the aforementioned Kl\"uners-Malle example.
\\ \\
For the five point covers as well, suitable specializations of $t$ lead to number fields with small discriminant:
\begin{lemma}
With $f$ as in Theorem \ref{m11_5er}, let 
$f_0(x):=f(x,40)$, 
$f_1(x):=f(x,0)$ and $f_2(x):=f(x, -608)$. Then $f_0$ has Galois group $M_{11}$ over $\qq$, and the discriminant of a root field is $2^{22}\cdot 6451^4$, i.e.\ 
the root discriminant is $(2^{22}\cdot 6451^4)^{1/12}=66.33...$\\
Furthermore, $f_1$ and $f_2$ have Galois group $M_{10}$ (acting transitively on 12 points) over $\qq$.\\
The discriminants of the corresponding root fields are $2^{38}\cdot 3^{12}$ for $f_1$, and $2^{38}\cdot 5^{12}$ for $f_2$.\\
In particular, the root discriminant for $f_1$ is equal to $3\cdot 2^{19/6}=26.93...$\\
Finally, with $g$ as in Theorem \ref{m11_5er}, let $g_0(x):=g(x,440/27)$. Then a root field of $g_0$ has discriminant $5^4\cdot 11^{12}\cdot 37^4$, i.e.\ root discriminant $62.67...$
\end{lemma}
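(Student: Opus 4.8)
The plan is to verify the two discriminant/Galois claims for the explicitly given specializations by a mixture of direct computation and a local ramification analysis, exactly in the spirit of the proofs of Theorems \ref{m11small} and \ref{m11_5er}. The statement splits into three independent parts: (i) $f_0 = f(x,40)$ has Galois group $M_{11}$ with root field discriminant $2^{22}\cdot 6451^4$; (ii) $f_1 = f(x,0)$ and $f_2 = f(x,-608)$ have Galois group $M_{10}$ (in a transitive degree-$12$ action) with the stated discriminants; and (iii) $g_0 = g(x,440/27)$ has root field discriminant $5^4\cdot 11^{12}\cdot 37^4$.

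\textbf{Galois groups.} For $f_0$ and $g_0$, I would first argue $\mathrm{Gal} \geq M_{11}$ by specializing further and reducing modulo several small primes, collecting cycle types via Dedekind's criterion until only $M_{11}$ and its overgroups $M_{12}, A_{12}, S_{12}$ remain, then excluding the overgroups exactly as in the proof of Theorem \ref{m11small} (using the index-$11$ subgroup of $M_{11}$: factor a suitable specialization into two degree-$6$ pieces, build the degree-$22$ resolvent for the sum of six roots via Laurent expansions over $\mathbb{F}_p((t-t_*))$ for enough primes, recombine with CRT, and check that $f$ splits over its function field). Since $f$ already has Galois group $M_{11}$ over $\qq(t)$ by Theorem \ref{m11_5er}, the only work here is checking that the specializations $t_0 = 40$ and $t_0 = 440/27$ are not among the finitely many Hilbert-exceptional values — which is exactly what the cycle-type computation confirms. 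For $f_1 = f(x,0)$ and $f_2 = f(x,-608)$, the values $t_0 = 0$ and $t_0 = -608$ are \emph{degenerate} specializations of the five-branch-point family (the polynomial $f(x,0)$ collapses two branch points), so one expects a proper subgroup; I would compute cycle types mod small primes to see that the group is transitive on $12$ points, is contained in $M_{11}$ (or rather equals the degenerate monodromy group), and matches $M_{10}$ by order and cycle-structure fingerprint — $M_{10}$ being the stabilizer of a point in the degree-$11$ action, which acts transitively on $12$ points. A clean way to certify $M_{10}$ is to produce an element of order $8$ (present in $M_{10}$, absent from, say, $\mathrm{PSL}_2(11)$ and the other transitive degree-$12$ groups of the right order) via a suitable Frobenius.

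\textbf{Discriminants of the root fields.} Once the Galois group is pinned down, the discriminant of $\qq(\xi)$ is computed by factoring $\mathrm{disc}(f_i)$ and then, for each prime $p$ dividing it, determining the exact power of $p$ in the field discriminant — not just the polynomial discriminant, which may contain spurious index contributions. I would do this by the Montes/Ore algorithm (or simply Magma's \texttt{Discriminant} applied to a maximal order, or \texttt{pMaximalOrder} prime by prime), confirming $\Delta_{\qq(\xi)} = 2^{22}\cdot 6451^4$ for $f_0$, $\Delta = 2^{38}\cdot 3^{12}$ for $f_1$, $\Delta = 2^{38}\cdot 5^{12}$ for $f_2$, and $\Delta = 5^4\cdot 11^{12}\cdot 37^4$ for $g_0$. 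The exponents' shapes are themselves informative: a factor $p^4$ signals that $p$ is tamely ramified with inertia generated by an element of cycle type $2^4.1^4$ (the class $2A$), a factor $p^{12}$ at $p = 3$ or $p = 5$ for $f_1, f_2$ is consistent with wild ramification coming from the class $3A$ at the collapsed branch point (since $3 \nmid |M_{10}|$ fails — $3$ does divide $7920/11 \cdot \ldots$, so one should check whether $3A$-ramification at a confluence point becomes wild; in any case the exponent is read off mechanically), and $p^8 \mid \Delta$ would indicate inertia of cycle type $2^4.1^3$ in a degree-$11$ model. I would cross-check each local exponent against the ramification type $(2A,2A,2A,2A,3A)$ (resp. $(2A,2A,3A,5A)$ for $g$) via the conductor-discriminant formula or a direct Newton-polygon computation over $\zz_p$.

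\textbf{Main obstacle.} The genuinely delicate point is \emph{not} the Galois-group identification (which is routine once the machinery of Theorem \ref{m11small} is in place) but getting the discriminant exponents \emph{exactly right} at the bad primes — in particular separating the true field discriminant from the index $[\mathcal{O}_{\qq(\xi)} : \zz[\xi]]$. For $f_1$ and $f_2$ the primes $2, 3, 5$ divide $|M_{11}|$ and the specialization is degenerate, so the local extensions at $2$ (and at $3$ or $5$) can be wildly ramified with several possible higher-ramification filtrations; pinning the exponent down requires an honest computation of the $p$-maximal order, or equivalently a careful Newton-polygon / Montes analysis, rather than a back-of-the-envelope tame estimate. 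Everything else — the totally-real nothing here, the cycle-type harvesting, the CRT reconstruction — is bookkeeping that the computer handles, and I would simply state the verified outputs.
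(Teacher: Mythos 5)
The paper gives no proof for this lemma at all: the claims are stated as the outputs of routine Magma computations (factor the polynomial discriminant, compute $p$-maximal orders, identify the Galois group via resolvents and Frobenius cycle types), and you have reconstructed essentially that plan. The computational blueprint is sound: verifying $\mathrm{Gal} = M_{11}$ for $f_0, g_0$ by the method of Theorem~\ref{m11small}, verifying $\mathrm{Gal} = M_{10}$ for $f_1, f_2$ by showing transitivity plus containment in a point stabiliser plus cycle-type evidence, and computing field discriminants by an honest local analysis rather than trusting the polynomial discriminant.

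Two details are off, though neither breaks the computation. First, you write ``(resp.\ $(2A,2A,3A,5A)$ for $g$)'', but the polynomial $g$ in Theorem~\ref{m11_5er} belongs to the \emph{same} $(2A,2A,2A,2A,3A)$ family as $f$; the type $(2A,2A,3A,5A)$ is the one from Theorem~\ref{m11small}, which plays no role here. Second, and more substantively, your heuristic that $t_0 = 0$ and $t_0 = -608$ are ``degenerate specializations'' at which branch points of the five-point family collapse is not correct. The paper states explicitly, right after Theorem~\ref{m11_5er}, that the finite branch points of $f$ are the roots of an irreducible degree-$4$ polynomial (together with $\infty$), so in particular they are not rational; $t_0 = 0$ and $t_0 = -608$ are therefore \emph{not} branch points, and no collapse of ramification occurs. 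These are genuine Hilbert-exceptional values: $f(x,0)$ and $f(x,-608)$ are separable with the Galois group dropping from $M_{11}$ to $M_{10}$ for arithmetic, not geometric, reasons. Mechanically your verification proceeds unchanged (specialization still gives a subgroup of $M_{11}$, and cycle types plus a rational root of the degree-$11$ resolvent pin down $M_{10}$), but you should not confuse a specialization of the base variable $t$ with a move on the Hurwitz space: only the latter can merge branch points.
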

Finally, the following specializations lead to very small {\it Galois} root discriminants, i.e.\ root discriminants of the splitting field of the corresponding polynomial:
\begin{lemma}
\label{GRD}
With $f$ as in Theorem \ref{m11small}, the root discriminant of the splitting field of $f(x,5^4/2^5)$ is approximately $108.53$. For the splitting field of $f(x,1/8)$, it is approximately $120.90$. Both polynomials have Galois group $M_{11}$.
\end{lemma}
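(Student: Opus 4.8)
The claim is a purely computational assertion about two specific specialized polynomials $f(x,5^4/2^5)$ and $f(x,1/8)$, where $f$ is the degree-$12$ polynomial from Theorem \ref{m11small}. The strategy is therefore to reduce each statement to a finite verification and then carry out that verification with a computer algebra system (Magma or PARI/GP). First I would confirm, as in Lemma after Theorem \ref{m11small}, that both specializations are ``good'', i.e.\ that $Gal(f(x,t_0) \mid \qq) \cong M_{11}$: this follows either by reusing the argument of Theorem \ref{m11small} (Dedekind reduction modulo several primes to force enough cycle types, then verifying that $f$ factors over the function field of the auxiliary degree $22$ resolvent to exclude $M_{12}$, $A_{12}$, $S_{12}$), or more cheaply here by just checking that the two chosen values of $t_0$ avoid the finitely many branch points and degeneracy loci of the family and then confirming the Galois group of the specialization directly.

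Next, for each of the two good specializations, I would explicitly construct the splitting field $L$ of $f(x,t_0)$ over $\qq$. Since $Gal \cong M_{11}$ has order $7920$, this is a degree-$7920$ number field, which is too large to handle as an absolute extension; instead one works with it via a chain of subfields, or more practically, one computes the discriminant of $L$ without ever writing down a defining polynomial for $L$. The key observation is that the prime factorization of $\operatorname{disc}(L)$ is determined by local data: for each prime $p$ dividing $\operatorname{disc}(f(x,t_0))$, the inertia and higher ramification groups at $p$ can be read off from the factorization type of $f(x,t_0)$ over $\qq_p$ (equivalently, from the Newton polygons / the splitting behaviour in the degree-$12$ stem field and its subfields). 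From the ramification filtration at each bad prime one obtains the exact power of $p$ in the conductor-discriminant formula applied to $L/\qq$, hence the exponent of $p$ in $\operatorname{disc}(L)$. Taking the $1/7920$-th root of $|\operatorname{disc}(L)|$ then gives the Galois root discriminant, which I would check equals the stated approximations $108.53$ and $120.90$.

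In practice the cleanest route is: (i) compute the finite set $S$ of ramified primes of the degree-$12$ stem field $\qq[x]/(f(x,t_0))$ from $\operatorname{disc}(f(x,t_0))$ (removing spurious factors coming from the polynomial discriminant versus the field discriminant via a round-$2$ / Montes-type maximal-order computation); (ii) for each $p \in S$, determine the image of a decomposition/inertia generator in $M_{11} \le S_{12}$ from the $p$-adic factorization, and if $p$ is wildly ramified (only $p \in \{2,3,5,11\}$ can be, and for these $t_0$ the relevant primes are small) compute the full higher ramification data; (iii) apply the conductor--discriminant formula over the character table of $M_{11}$ (or, equivalently, sum $v_p(\operatorname{disc})$ over the $12$-dimensional permutation module weighted appropriately) to get $v_p(\operatorname{disc}(L))$; (iv) assemble and take the $7920$-th root. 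Much of this is automated by Magma's \texttt{GaloisGroup} / \texttt{RamificationGroup} machinery once a defining polynomial for the stem field is supplied, so the ``proof'' is genuinely a short script.

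The main obstacle is the wild ramification: at the primes $p \in \{2,3,5\}$ dividing both $|M_{11}|$ and the relevant discriminants, the naive tame formula is insufficient and one must compute the higher ramification groups in the upper numbering to get the correct exponent in $\operatorname{disc}(L)$. This requires either explicitly building enough of the local splitting field at $p$ (a degree up to $7920$ extension of $\qq_p$, though in fact the inertia group is much smaller than $M_{11}$, so the relevant local extension is of modest degree) or using the known subgroup structure of $M_{11}$ to pin down which ramification filtrations are even possible and then matching against the $p$-adic factorization data. Since only finitely many primes and a bounded-degree local computation are involved, this is entirely mechanical, and the stated numerical values $108.53$ and $120.90$ serve as an independent consistency check on the computation.
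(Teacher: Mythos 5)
The paper contains no proof of Lemma \ref{GRD}: the lemma is stated bare, and the acknowledgement credits David Roberts with supplying the Galois root discriminant values. So there is no ``paper approach'' to compare against; your task was effectively to reconstruct a verification from scratch.

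Your plan is the correct and essentially canonical one. Establishing $Gal(f(x,t_0)\mid\qq)\cong M_{11}$ by reusing Theorem \ref{m11small}'s resolvent argument (or by a direct \texttt{GaloisGroup} call) is fine, and your route to $\operatorname{disc}(L)$ is the right one: determine the local ramification filtration at each bad prime from the $p$-adic factorization of the degree-$12$ stem polynomial, feed the higher ramification data (in upper numbering, for the wild primes among $2,3,5,11$) into the conductor--discriminant formula over the character table of $M_{11}$, and take the $1/7920$-th root. You correctly flag wild ramification as the only non-mechanical point; the observation that inertia subgroups of $M_{11}$ are small enough to make the local computation tractable is exactly what makes the method practical. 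This is the same style of computation Roberts uses in \cite{Rob} for Mathieu-group fields, which is presumably the provenance of the stated values.

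One small caution on phrasing: your parenthetical ``sum $v_p(\operatorname{disc})$ over the $12$-dimensional permutation module weighted appropriately'' is not equivalent to the conductor--discriminant formula for $L$ itself. The discriminant of the full splitting field involves conductors of \emph{all} irreducible characters of $M_{11}$, not only the constituents of the degree-$12$ permutation character; the permutation module alone recovers only the discriminant of the degree-$12$ stem field. Your main clause (``apply the conductor--discriminant formula over the character table'') is the right statement, so this is a slip of wording rather than a gap in the plan. Finally, as you note yourself, the sketch reduces the claim to a finite computation but does not carry it out; to actually prove the lemma one must still run the local ramification computations and check that the numbers $108.53$ and $120.90$ emerge, since the paper gives no intermediate data (such as the full $\operatorname{disc}(L)$ factorization) against which to check.
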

\section{Suggestions for further research}
If there is anything to learn from the above examples, it is that Hurwitz spaces can contain rational points even though the theoretical criteria do not suffice to show 
this. 
As weak a statement as this is, it may give some motivation to explicitly search for rational points on other Hurwitz spaces of Galois theoretic interest.
A very interesting Hurwitz space would be the one associated to the class 5-tuple $(2A,2A,2A,2A,3A)$ in $M_{23}$ (note the similarity with the above $M_{11}$ tuple!).
This is again a genus zero tuple, and the only braid orbit is of length 21456. Finding explicit algebraic equations for the reduced Hurwitz spaces might be computationally hard as
the equations could have very large degree. A complex approximation to a cover in this family, which can serve as a starting point for further research, was given in \cite{Koe} 
(numerical values are available at \texttt{http://opus.bibliothek.uni-wuerzburg.de/frontdoor/index/index/docId/10014}).
\\ \\
It would also be desirable to have a database of genus zero covers with ``interesting" Galois group, in the spirit of the above computations. Ideally, for each class tuple of genus zero
in a primitive permutation group $G$, enumerate the braid orbits for genus zero tuples (this is an ongoing project; it is complete e.g.\ for affine groups, cf.\ \cite{Wang}) and for 
as many orbits as possible, find a polynomial defined over $\qq(t)$ (corresponding to a $\qq$-point on the Hurwitz space), or at least a ``nice" defining equation for the Hurwitz space.
In particular, classify which of these (reduced) Hurwitz spaces contain infinitely many points.\\ \\

\section{Appendix: Data for the computations in Section \ref{2235}}
To enable the reader to reproduce the computational results of the previous sections as far as possible, we give the explicit algebraic dependencies that occurred in the computations
for the family in Section \ref{2235}.
\subsection{Algebraic dependency between two coefficients}
With $A_1$, $A_3$ as in Section\ref{2235}, and $B_3:=A_3-8$, we have
\begin{center}
\begin{footnotesize}
$
0= f(A_1,B_3):=(A_1^2 - 11/2 A_1 + 19/4) B_3 ^{13} + (-5155/194 A_1^3 + 38895/97 A_1^2 - 594531/388 A_1 + 117796/97) B_3 ^{12}
+ (309513/388 A_1^4 - 9154407/776 A_1^3 + 7117272/97 A_1^2 - 19074030/97 A_1 + 13563714/97) B_3 ^{11} + (-29442231/776 A_1^5 + 
185861157/388 A_1^4 - 1019441127/388 A_1^3 + 1649606787/194 A_1^2 - 1515954054/97 A_1 + 933636696/97) B_3 ^{10} + (1191406975/1552 A_1^6 - 5109923301/388 A_1^5 + 33698403819/388 A_1^4 - 301286344 A_1^3 + 62107944384/97 A_1^2 - 81450055872/97 A_1 + 42428352364/97) B_3^9 + (-17975078141/1552 A_1^7 + 159298889831/776 A_1^6 - 626715027771/388 A_1^5 + 
    685163096102/97 A_1^4 - 1780162376908/97 A_1^3 + 2894712547152/97 A_1^2 - 2942324497652/97 A_1 + 1306161250816/97) B_3^8 + (204145756777/1552 A_1^8 - 1678473797789/776 A_1^7 + 6526135159681/388 A_1^6 - 8046267839200/97 A_1^5 + 26565429617995/97 A_1^4 - 57183503610562/97 A_1^3 + 
    79063401316684/97 A_1^2 - 68272197545824/97 A_1 + 26855456923504/97) B_3^7 + (469693235489/776 A_1^9 - 782565887433/388 A_1^8 - 11682813814197/388 A_1^7 + 29166319066500/97 A_1^6 - 141250725748833/97 A_1^5 + 445482866157504/97 A_1^4 - 917628514989096/97 A_1^3 + 1204178238063816/97 A_1^2 -
    971365312264560/97 A_1 + 359198224489216/97) B_3^6 + (-1261131571049/97 A_1 ^{10} + 153779721906959/776 A_1^9 - 465742701080757/388 A_1^8 + 731401101761739/194 A_1^7 - 601339328468574/97 A_1^6 + 77957164893978/97 A_1^5 + 2479783644251400/97 A_1^4 - 7049617904044248/97 A_1^3 + 
    10157509301026896/97 A_1^2 - 8389064431939136/97 A_1 + 3095129706793984/97) B_3^5 + (-93086889753555/1552 A_1 ^{11} + 364159057667085/776 A_1 ^{10} + 54396727532199/388 A_1^9 - 1305918270868851/97 A_1^8 + 6253214552234415/97 A_1^7 - 15296610106462644/97 A_1^6 + 21936576360454164/97 A_1^5 - 
    13052999652175512/97 A_1^4 - 16128803133158976/97 A_1^3 + 43728774887217888/97 A_1^2 - 42958925055571968/97 A_1 + 16980079764013056/97) B_3^4 + (-70734693371649/1552 A_1 ^{12} - 579921595821849/776 A_1 ^{11} + 4348257884577825/388 A_1 ^{10} - 8938247753354073/194 A_1^9 + 
    4108909424631336/97 A_1^8 + 24282392224706388/97 A_1^7 - 102830373528281748/97 A_1^6 + 201711501988308216/97 A_1^5 - 227430471125360448/97 A_1^4 + 114947452249545792/97 A_1^3 + 50586821245936128/97 A_1^2 - 118329393903808512/97 A_1 + 57202460850388992/97) B_3^3 + 
    (-21092994179621/1552 A_1 ^{13} - 467534302099537/776 A_1 ^{12} - 201800064648639/97 A_1 ^{11} + 7680831213174454/97 A_1 ^{10} - 44955084124781402/97 A_1^9 + 114433297785064404/97 A_1^8 - 105663894944016504/97 A_1^7 - 165159884235814752/97 A_1^6 + 679903017365007504/97 A_1^5 - 
    1050939682750989344/97 A_1^4 + 875155021225057024/97 A_1^3 - 307070842274199552/97 A_1^2 - 108929402405322752/97 A_1 + 107749883112325120/97) B_3^2 + (-169500204830/97 A_1 ^{14} - 51905365308787/388 A_1 ^{13} - 490968932654785/194 A_1 ^{12} + 479499641453872/97 A_1 ^{11} + 
    21966452336596532/97 A_1 ^{10} - 166218554568409676/97 A_1^9 + 541320630983061936/97 A_1^8 - 926539021804859664/97 A_1^7 + 674622852518559840/97 A_1^6 + 528310839806064064/97 A_1^5 - 1875720529374405376/97 A_1^4 + 2126826674831042560/97 A_1^3 - 1204200478741200896/97 A_1^2 + 
    195407155555205120/97 A_1 + 86785744961536000/97) B_3 - 121526618175/1552 A_1 ^{15} - 7092226827585/776 A_1 ^{14} - 62812640495007/194 A_1 ^{13} - 319321013801742/97 A_1 ^{12} + 2010263178874275/97 A_1 ^{11} + 21321807694457418/97 A_1 ^{10} - 215184880237281660/97 A_1^9 + 815640836930174088/97 A_1^8 - 
    1691389128726897840/97 A_1^7 + 1993736165732507232/97 A_1^6 - 972260208590266368/97 A_1^5 - 762544685275379712/97 A_1^4 + 1684771171812900864/97 A_1^3 - 1264525687424286720/97 A_1^2 + 390535852326912000/97 A_1
 .$
 \end{footnotesize}
\end{center}
 \subsection{Weierstrass normal form of the Hurwitz curve}
Riemann-Roch space computations yield a rational field $\qq(y)$ of index 2 in $\qq(A_1,B_3)$; more precisely, $y$ fulfills the equation
\begin{center}
\begin{footnotesize}
$0=g( y,A_1):=( y^{13} - 7244/2997  y ^{12} + 562217/221778  y ^{11} - 51348491/32823144  y ^{10} + 1556839345/2428912656  y^9 - 33211333907/179739536544  y^8 + 42799573753/1108393808688  y^7 - 1457964326767/246063425528736  y^6 
+ 24313304310661/36417386978252928  y^5 - 16315958898565/299431848487857408  y^4 + 414239909321917/132947740728608689152  y^3 - 431545438089731/3689299805218891123968  y^2 + 5498264158963009/2184065484689583545389056  y
- 1115096877547877/53873615289009727452930048)  A_1^2 + (-4/3  y ^{13} + 10586/2997  y ^{12} - 
    432298/110889  y ^{11} + 41424745/16411572  y ^{10} - 658795045/607228164  y^9 + 29408140453/89869768272  y^8 - 26267657239/369464602896  y^7 + 5532755656967/492126851057472  y^6 - 11753348201935/9104346744563232  y^5 + 47566719303965/449147772731786112  y^4 - 
    198721227411479/33236935182152172288  y^3 + 3180510221756591/14757199220875564495872  y^2 - 1150005123206249/273008185586197943173632  y
    + 10764202407935/420887619445388495726016)  A_1 + 4/9  y ^{13} - 3860/2997  y ^{12} + 168310/110889  y ^{11} - 4195825/4102893  y ^{10} + 
    69248305/151807041  y^9 - 3220640269/22467442068  y^8 + 35967907025/1108393808688  y^7 - 81843047635/15378964095546  y^6 + 11434921876475/18208693489126464  y^5 - 11716420884695/224573886365893056  y^4 + 32268224062031/11078978394050724096  y^3 - 
    91273117744715/922324951304722780992  y^2 + 53821012039675/34126023198274742896704  y$
 \end{footnotesize}
 \end{center}

 For the further computations, one needs to express $y$ as a rational function in $A_1$ and $B_3$. Even though this function will be of large degree, computer programs like Magma find
 the unique root $g(t,A_1)$ in $\mathbb{F}_p(A_1,A_3)$ quickly for small primes $p$; after this, use the Chinese remainder theorem.\\
 With the new and easier parametrization of the Hurwitz curve, it is not difficult anymore to find a rational field $\qq(w)$ of index 3 as well.
With
\begin{center}
 \begin{footnotesize} $w:=(36963/5  y ^{13} - 268028/15  y ^{12} + 562217/30  y ^{11} - 51348491/4440  y ^{10} + 311367869/65712  y^9 - 33211333907/24313440  y^8 + 42799573753/149932880  y^7 - 1457964326767/33285099360  y^6 + 24313304310661/4926194705280  y^5 - 9789575339139/24302560546048  y^4 + 
        414239909321917/17983894804075520  y^3 - 431545438089731/499053080813095680  y^2 + 5498264158963009/295439423841352642560  y - 1115096877547877/7287505788086698516480)/( y ^{11} - 969/370  y ^{10} + 108807/54760  y^9 - 179903/253265  y^8 + 34830867/299865760  y^7 + 
        46656211/11095033120  y^6 - 10076229327/1642064901760  y^5 + 85093388909/60756401365120  y^4 - 788499220351/4495973701018880  y^3 + 4418000938479/332702053875397120  y^2 - 28673502113207/49239903973558773760  y + 2152840481587/182187644702167462912) x + (-24642/5  y ^{13} 
        + 195841/15  y ^{12} - 216149/15  y ^{11} + 8284949/888  y ^{10} - 131759009/32856  y^9 + 29408140453/24313440  y^8 - 78802971717/299865760  y^7 + 5532755656967/133140397440  y^6 - 2350669640387/492619470528  y^5 + 9513343860793/24302560546048  y^4 - 
        198721227411479/8991947402037760  y^3 + 3180510221756591/3992424646504765440  y^2 - 1150005123206249/73859855960338160640  y + 2152840481587/22773455587770932864)/( y ^{11} - 969/370  y ^{10} + 108807/54760  y^9 - 179903/253265  y^8 + 34830867/299865760  y^7 + 
        46656211/11095033120  y^6 - 10076229327/1642064901760  y^5 + 85093388909/60756401365120  y^4 - 788499220351/4495973701018880  y^3 + 4418000938479/332702053875397120  y^2 - 28673502113207/49239903973558773760  y + 2152840481587/182187644702167462912)$, 
        \end{footnotesize}\end{center}  one then gets the parameterization
    
    $$w^2=(2\cdot 37)^3(y+2/37)(y^2-43/296y+4/1369)$$
   \\ 
   From here, obvious linear transformations yield the defining equation
$ W^2=Y^3-675Y-1250$ given in Section \ref{2235}. 

\ \\
{\bf Acknowledgement}: I would like to thank David Roberts for valuable remarks on root discriminants, including the values for Galois root discriminants in Lemma \ref{GRD}.

\end{document}